\documentclass{amsart}

\usepackage{amscd, amsmath, amssymb, amsfonts, mathtools, verbatim,hyperref,derivative,amsaddr}
\usepackage{hyperref}

\usepackage{times}
\usepackage[cmtip,  all]{xy}
\usepackage{graphicx}
\usepackage[active]{srcltx}
\usepackage{color}
\usepackage{textcomp}

\numberwithin{equation}{section}

\newtheorem{theorem}[equation]{Theorem}
\newtheorem{corollary}[equation]{Corollary}
\newtheorem{lemma}[equation]{Lemma}
\newtheorem{proposition}[equation]{Proposition}
\newtheorem{question}[equation]{Question}

\newtheorem*{ntheorem}{Theorem}

\theoremstyle{remark}
\newtheorem{remark}[equation]{\bf Remark}
\theoremstyle{remark}
\newtheorem{example}[equation]{\bf Example}

\theoremstyle{remark}
\newtheorem{definition}[equation]{\bf Definition}

\newcommand{\N}{\mathbb{N}}

\newcommand{\C}{\mathbb{C}}

\newcommand{\ML}{\mathrm{ML}}

\newcommand{\Ufd}{\mathrm{UFD}}
\newcommand{\Exp}{\mathrm{EXP}}

\newcommand{\Tr}{\mathrm{tr.deg}}

\newcommand{\Lnd}{\mathrm{LND}}

\newcommand{\Ker}{\mathrm{Ker}}
\newcommand{\Pl}{\mathrm{pl}}

\oddsidemargin .3 in		\evensidemargin .3 in
\marginparwidth 50pt 		\marginparsep 15pt
\headsep .5in			\textwidth 15.5cm
\textheight 20cm		\topmargin 0.5cm
\topskip 0.5cm			\brokenpenalty=10000
\clubpenalty=1000		\widowpenalty=1000
\hyphenpenalty=1000		
\flushbottom

\title{A note on $\mathbb{G}_a$-actions in positive characteristic}
\author{P M S Sai Krishna}
\address{\noindent Department of Mathematics,  Indian Institute of Technology Bombay,  Powai,  Mumbai 400076,  India\\
E-mail: saikrishna183@gmail.com}

\begin{document}

\date{}
\maketitle
{\let\thefootnote\relax\footnotetext{{2020 Mathematics Subject Classification:}  {13A50,13B25, 13N15, 14R20}}

{\let\thefootnote\relax\footnotetext{{Keywords:}~ {polynomial ring,  exponential map, $\mathbb{G}_a$ actions, ring of invariants, Weak Abhyankar-Sathaye conjecture}}}

\begin{abstract}
    In \cite{Miya3}, Miyanishi proved that the ring of invariants of any $\mathbb{G}_a$ action on $\mathbb{A}^3$ is $\mathbb{A}^2$, when the field $k$ has zero characteristic. However, it is not known if this result holds when $k$ has positive characteristic. We provide a sufficient condition under which this result holds in positive characteristic. We also prove the following results related to the rigidity of the ring of invariants of an exponential map of a polynomial ring. 
    \begin{enumerate}
        \item Let $B=R^{[n]}$, where $R$ is a $k$-domain and $\delta \in \mathrm{EXP}_R(B)$ is a triangular exponential map. Then $B^{\delta}$ is non-rigid. In particular, for any field $k$ of zero characteristic the kernel of any triangular $R$-derivation of $R^{[n]}$ is non-rigid. 

        \item Let $k$ be a field of zero characteristic and $R$ be a $k$-domain. Then the kernel of any linear locally nilpotent $R$-derivation of $R^{[n]}$ is non-rigid. 
    \end{enumerate} 
    When $k$ is an algebraically closed of zero characteristic, the commuting derivations conjecture for $k^{[3]}$ has been proved in \cite{SM}, and 
    it is shown in \cite{EK} that the weak Abhyankar Sathaye conjecture is equivalent to the commuting derivations conjecture.  By introducing the notion of commuting exponential maps and formulating the commuting exponential maps conjecture, we show that the weak Abhyankar-Sathaye conjecture is equivalent to the commuting exponential maps conjecture for any field of arbitrary characteristic. In particular, we prove the commuting derivations conjecture$(CD(3))$ for any field of zero characteristic. 
\end{abstract}

\section{Introduction}
\emph{Throughout the paper, $k$ denotes a field, $B,R$ are domains such that $k \subset R \subset B$, $\Lnd $ denotes locally nilpotent derivations and $\Lnd (B)$ denotes the set of all locally nilpotent derivations of $B$. Similarly, $\Exp (B)$ denotes the set of all exponential maps of $B$, and $B^{\delta}$ denotes the ring of invariants of $\delta \in \Exp (B)$. The characteristic of $k$ is zero when working with $\Lnd$ and arbitrary when working with exponential maps. $B^{[n]}$ denotes the polynomial ring in $n$ variables over $B$.}

Exponential maps of $k$-domains coincide with locally nilpotent derivations (\ref{rmk1}) when the characteristic of $k$ is zero. They are also known as locally finite iterative higher derivations. In particular, exponential maps of finitely generated $k$-domains are in bijective correspondence with $\mathbb{G}_a$ actions on the corresponding affine variety. Exponential maps ($\Lnd$) are an important tool in Affine Algebraic Geometry as demonstrated in \cite{ML}, \cite{NG1} and \cite{NG2}. The mere existence of an exponential map ($\Lnd $) on an affine $k$-domain imposes some structure on it. For instance, when $k$ is algebraically closed, and $A$ is a $k$-domain with $\Tr _kA=1$, then A has a non-trivial exponential map ($\Lnd $) if and only if $A=k^{[1]}.$  Thus, it is of importance to study whether a $k$-domain has a non-trivial exponential map ($\Lnd $). This problem is called the rigidity problem, where a $k$-domain $B$ is called rigid if it has no non-trivial exponential map ($\Lnd$) (or a non-trivial $\mathbb{G}_a$ action in case of varieties). One of the fundamental objects in the study of exponential maps is the ring of invariants (the kernel in the case of $\Lnd$). Thus, it is a natural consequence to study whether the ring of invariants of an exponential map (the kernel of a $\Lnd$) is rigid.

In \cite{Miya3}, Miyanishi showed that the kernel of any non-zero locally nilpotent derivation of $k^{[3]}$ is $k^{[2]}$. However, it is not known if the ring of invariants of a non-trivial exponential map of $k^{[3]}$ is always $k^{[2]}$. The notion of rank of an exponential map of $k^{[n]}$ can be defined similarly (\cite[$7$ of $2.3$]{PMS}) to the notion of rank of a locally nilpotent derivation of $k^{[n]}$ as defined in  \cite[$3.2.1$]{Gene} when the characteristic of $k$ is zero. It is easy to see that the ring of invariants of any rank $1$ exponential map of $k^{[n]}$ is $k^{[n-1]}$. In \cite{KW}, it is shown that when $R$ is an $HCF$-ring, then the ring of invariants of any $\delta \in \Exp _R(R[X,Y])$ is $R^{[1]}$ and from which it follows that any rank $2$ exponential map of $k^{[n]}$ has ring of invariants $k^{[n-1]}\ $ \cite[Proposition $3.4$]{PMS}. In particular, the ring of invariants of any rank $2$ exponential map of $k^{[3]}$ is $k^{[2]}$. In \cite{Koj}, it is shown that if the ring of invariants of any $\delta \in \Exp (k^{[3]})$ is non-rigid, then it is $k^{[2]}$. This leads to the following question. 
\begin{question}
    Is the ring of invariants of a non-trivial exponential map of $k^{[3]}$ always non-rigid?
\end{question}
In section $3$, we answer this question partially by providing a sufficient condition under which the ring of invariants of an exponential map of a $k$-domain is non-rigid. The key idea is to use the local property of rigidity (\ref{local rigid}). As a consequence of a general result (\ref{main}), we get the following result. 

\begin{ntheorem}[Theorem $(\ref{plinthk3})$]
    Let $B=k^{[3]}$ and $\delta \in \Exp (B)$. If the plinth ideal ($pl(\delta)$) contains a quasi-basic element, then $B^{\delta}= k^{[2]}$. 
\end{ntheorem} 

We note that for locally nilpotent derivations of $k^{[n]}$, Freudenburg posed a similar question in \cite[Question $11.1$]{Gene}. It is stated as follows. 
\begin{question}\label{q2}
    Let $D\in \Lnd (k^{[n]})$ and $n\geq 4.$ Is $\ker (D)$ always non-rigid? 
\end{question}
In section $4$, by finding a commuting locally nilpotent derivation with a different kernel, we answer (\ref{q2}) affirmatively for all triangular derivations and linear locally nilpotent derivations of $k^{[n]}$, which follow as a consequence of a more general result  (\ref{rigid}).  These are the main results of section $4$.
\begin{ntheorem}[Theorem (\ref{tri})]
    Let $D\in \Lnd _R (R^{[n]})$ and $n\geq 2$.  If $D$ is triangular, then $\Ker (D)$ is non-rigid.
\end{ntheorem}

\begin{ntheorem}[Theorem (\ref{linear})]
    Let $n\geq 2$ and  $D\in \Lnd_R (R^{[n]})$ is linear. Then, $\Ker (D)$ is non-rigid. 
\end{ntheorem}

In section $5$, we introduce the notion of commuting exponential maps (\ref{commute}) and applying the ideas of section $4$ to exponential maps to prove the following result.
\begin{ntheorem}[Theorem (\ref{triangularexpo})]
     Let $B=R^{[n]}$, where $R$ is a $k$-domain, $n\geq 2$ and $\delta \in \Exp_R (B)$ be a triangular exponential map. Then $B^{\delta}$ is non-rigid. 
\end{ntheorem}

In section $6$, we expand on the notion of commuting exponential maps defined in section $5$. We recall the commuting derivations conjecture which is a well-known conjecture related to locally nilpotent derivations. 

\textbf{Commuting derivations conjecture CD(n):} Let $B=k[X_1,\dots,X_n]=k^{[n]}$ and $D_1,\dots,D_{n-1}\in \Lnd(B)$ be pairwise commuting and linearly independent (over $B$). Then $\bigcap_{i=1}^{n-1}\ker D_i=k[f]=k^{[1]}$, where $f$ is a coordinate of $B$.

In \cite{SM}, it was shown that the commuting derivations conjecture holds for $\C^{[3]}$. When $k$ is an algebraically closed field of zero characteristic, it was shown in \cite{EK} that the commuting derivations conjecture is equivalent to the weak Abhyankar-Sathaye conjecture, which is a weaker version of the Abhynakar-Sathaye conjecture. 

\textbf{Abhyankar-Sathaye Conjecture:} Let $f\in K[X_1,\dots,X_n]$ be such that $k[X_1,\dots,X_n]/(f)=k^{[n-1]}$. Then $f$ is a coordinate of $k[X_1,\dots,X_n]$.

\textbf{Weak Abhyankar-Sathaye Conjecture WAS(n):} Let $f\in K[X_1,\dots,X_n]$ be such that $k(f)[X_1,\dots,X_n]=k(f)^{[n-1]}$. Then $f$ is a coordinate of $k[X_1,\dots,X_n]$. 

We note that every $f$ satisfying the Abhyankar-Sathaye conjecture satisfies the weak Abhyankar-Sathaye conjecture.   Although there are counter-examples for the Abhyankar-Sathaye conjecture (\cite{Seg} and \cite{Nag}) when $k$ is of positive characteristic, however, it is true for some special cases.  We refer the reader to \cite{DG} for a survey of partial results on the  Abhyankar-Sathaye conjecture. The weak Abhyankar-Sathaye conjecture holds for $n=3$ and was proved in \cite{Kal}  when $k=\C$ and  in \cite{DK} when $k$ is any field of zero characteristic. However, it not known in the case when $k$ is of positive characteristic. 

In section $4$, we introduce the notion of pairwise commuting irredundant locally nilpotent derivations (\ref{irredLnd}) and show that this property is equivalent to the existing notion of pairwise commuting linearly independent locally nilpotent derivations (\ref{capture}). As a consequence of this (\ref{capture}), in the case of zero characteristic the equivalence of commuting derivations conjecture and weak Abhyankar-Sathaye conjecture proved in \cite{EK} holds even when we replace the pairwise commuting linearly independent locally nilpotent derivations with pairwise commuting irredundant locally nilpotent derivations. This motivates the definition of irredundant pairwise commuting exponential maps (\ref{irred}) and we formulate the commuting exponential maps conjecture as follows. 

\textbf{Commuting exponential maps Conjecture CE(n):} Let $B=k[X_1,\dots,X_n]=k^{[n]}$ and $\delta_1,\dots,\delta_{n-1} \in \Exp (B)$ be a set of irredundant commuting exponential maps (\ref{irred}). Then $\bigcap_{j=1}^{n-1}B^{\delta_j}=k[f]$, where $f$ is a coordinate of $B$.

 Following the ideas from \cite{EK}, we prove that the commuting exponential maps conjecture is equivalent to the weak Abhyankar-Sathaye conjecture for any field of arbitrary characteristic (\ref{equivalence}). As a consequence, we deduce that the commuting locally nilpotent derivations conjecture is true for $n=3$ for any field $k$ of zero characteristic (\ref{CD3}).

\section{Preliminaries}

In this section, we recall some definitions related to exponential maps. For further details related to them, we refer the reader to \cite{CM} and \cite[Chapter $1$]{Miya2}. We will use the following definition of an exponential map.

\begin{definition}\label{def}Let $k \subset R \subset B$.

An $R$-\emph{Exponential map} $(\delta)$ on $B$ is a set of $R$-linear endomorphisms $\{D_n\}_{n=0}^{\infty}$  of $B$ satisfying the following conditions. 
 \begin{enumerate}
        \item $D_0$ is the identity map on $B$.

        \item $\forall b\in B$, there exists an integer $n>0$ such that $D_i(b)=0$ for all $i\geq n$.

        \item For all $x,y \in B$ and for all integer $n\geq 0$, $D_n(xy)=\sum_{0\leq i\leq n}D_i(x)D_{n-i}(y)$.

        \item for all integers $i,j\geq 0$, $D_j \circ D_i=\binom{i+j}{j}D_{i+j}$.
 \end{enumerate}
\end{definition}

\begin{remark}
    \begin{enumerate}
    \item The $R$-linear endomorphisms $D_n$ are called \emph{higher derivatives} of $B$. We note that $D_1$ is a \emph{locally nilpotent derivation of $B$}.

    \item We can view $\delta$ as a $R$-algebra homomorphism from $B \longrightarrow B^{[1]}=B[t]$  and express $\delta$ as follows. $$\delta_t(x)=D_0(x)+D_1(x)t+D_2(x)t^2+\cdots+D_m(x)t^m $$
    
    \item  We denote the set of all $R$-exponential maps on $B$ by $\Exp _R(B)$. The set $B^{\delta}=\{x\in B|\  \delta(x)=x\}$ is called the \emph{ring of $\delta$-invariants} of $B$. We call $\delta \in \Exp _R(B)$ \emph{non-trivial} if $B^{\delta}\neq B$. 
    
    \item  We denote by $lc_{\delta}(x)$ the leading coefficient of $\delta(x)$ which is given by $D_m(x)$, where $m=\deg_t(\delta(x))$. 

    \item When $R=k$ is a field the set of all exponential maps of $B$ is denoted by $\Exp (B)$
   
\end{enumerate}
\end{remark}

\begin{example}\label{example}
    Let $B=k[X_1, \dots,  X_n]$ be the polynomial ring in $n$ variables. Let $\delta_i \in \Exp (B)$ be such that  $\delta_i(X_j)=X_j$ $\forall \ j\neq i$ and  $\delta_i(X_i)=X_i+t$. Such $\delta_i$ are called \emph{shift} exponential maps on the polynomial ring. 
\end{example}

\begin{definition}\label{basic}
Let $\delta \in \Exp (B)$. 
\begin{enumerate}
    \item  Any $x\in B$ such that $\deg_t\delta(x)$ has minimal positive $t$-degree is called a \emph{local slice} of $\delta$. Every non-trivial exponential map has a local slice.

     \item  An element $x\in B$  is called a \emph{slice} if $x$ is a local slice of $\delta$ and $lc_{\delta}(x)$ is a unit.
    \item The \emph{Makar-Limanov invariant} is a $k$-subalgebra of $B$ defined as $\ML (B)=\bigcap _{\delta \in \Exp (B)}B^{\delta}.$

    \item $B$ is called \emph{rigid} if there does not exists any non-trivial exponential map on $B$. This is equivalent to the condition $\ML (B)=B$.

    \item $B$ is $\emph{semi-rigid}$  if  every non-trivial exponential map of $B$ has the same ring of invariants. 

    \item A subring $A$ of $B$ is said to be \emph{factorially closed} in $B$ if for any non-zero $x, y \in B$ such that $xy\in A$,  then $x, y \in A$. Note that if $A$ is factorially closed in $B$, then $A$ is algebraically closed in $B$.

    \item The \emph{plinth ideal} associated to $\delta$ is defined as $$pl(\delta)=\{lc_{\delta}(x)| \textit{where $x$ is a local slice of } \delta \} \cup \{0\}.$$  It follows from $(1)$ of (\ref{first principles}) that the plinth ideal $(\Pl (\delta))$ is an ideal of $B^{\delta}$. This was introduced in \cite{YN} and referred to as the plinth ideal in \cite{SK}.

    \item Let $B=R^{[n]}$ and $\delta \in \Exp _R(B)$. We say $\delta=\{D_n\}_{n=0}^{\infty}$ is \emph{triangular over $R$} if there exists a coordinate system $\{X_1,\dots,X_n\}$ of $B$ over $R$ such that 
    \begin{enumerate}
        \item  $\delta(X_i)-X_i \in tR[X_1,\dots,X_{i-1}][t]$ for all $1\leq i \leq n$.

        \item If $i$ is the smallest integer such that $\delta(X_i)\neq X_i$, then $X_i$ is a local slice of $\delta$.
    \end{enumerate}
    The conditions $(a)$ is equivalent to $D_j(X_i)\in R[X_1,\cdots, X_{i-1}]$ for all $1\leq i\leq n$ and $j\geq 1$.

\end{enumerate}   
\end{definition}

The following definition was introduced \cite[$1.4$]{DD} for elements in $k^{[3]}$ when the characteristic of $k$ is zero. We define a modified version for elements in any $k$-domain without requiring the irreducible condition.
\begin{definition}
 A non-zero element $x \in B$ is called a \emph{quasi-basic} element if there exists two non-trivial  exponential maps $\delta ,\epsilon $ of $B$  such that $B^{\delta}\neq B^{\epsilon}$ and $x\in  B^{\delta}\cap B^{\epsilon}.$   
\end{definition}

We summarise below some useful properties of exponential maps \cite[Lemma $2.1,  2.2$]{CM} and \cite[$1.3.1$ and $1.5$]{Miya2}.
\begin{proposition}\label{first principles}
Let $\delta$ be a non-trivial exponential map on a $k$-domain $B$ and $x$ be a local slice with $n=\deg_{t}\delta(x)$. Let $c=lc_{\delta}(x)=D_n(x)$, where $D_i$'s are as in (\ref{def}).
\begin{enumerate}
    \item $D_i(x)\in B^{\delta}$ for all $i>0.$ 
    
    \item  $B^{\delta}$ is factorially closed in $B$. In particular,  $B^{\delta}$ is algebraically closed in $B$. 

    \item \label{Slice} $B[c^{-1}]=B^{\delta}[c^{-1}][x]$. Moreover, if $x$ is a slice then $B=B^{\delta}[x]$. This is called the \emph{Slice theorem.} 
    
    \item  $\Tr _{B^{\delta}}B=1$.

    \item Let $A=B^{\delta}$ and $S\subset A\setminus\{0\}$ be a multiplicative closed subset. Then $\delta$ extends to a non-trivial exponential map $S^{-1}\delta$ on $S^{-1}B$ defined as $S^{-1}\delta (\frac{b}{s})=\frac{\delta(b)}{s}$ for all $b\in B$ and $s\in S$. Moreover, $(S^{-1}B)^{S^{-1}\delta}=S^{-1}(B^{\delta})$.
\end{enumerate}
\end{proposition} 

We require the following two results related to exponential maps from \cite[Theorem $3.1$]{CM} and \cite[Theorem $3.1$]{Koj}, respectively.  
\begin{theorem}[\emph{Semi-Rigidity Theorem}]\label{semi}
Let $B$ be an affine $k$-domain. Then $B$ is rigid if and only if $B^{[1]}$ is semi-rigid.  Equivalently,  $B \textit{ is rigid}$ if and only if $\ML (B^{[1]})=B.$ 
\end{theorem}

\begin{theorem}\label{kojima}
    Let $B=k^{[3]}$ and a non-trivial $\delta \in \Exp (B)$. If $B^{\delta}$ is non-rigid or if the characteristic of $k$ is zero, then $B^{\delta}=k^{[2]}$.
\end{theorem}

We recall the following theorem \cite{OZ} due to Zariski.  
\begin{theorem}[Zariski's Finiteness Theorem]\label{zariski}
    Let $A$ be a normal affine domain over a field $k$, and $L$ be a field such that $k\subset L \subset frac(A)$. If $\Tr _k(L)\leq 2$, then $L\cap A$ is affine over $k$.
\end{theorem}

We require the following result related to exponential maps from  \cite[Corollary $3.3$]{SK}.

\begin{corollary}\label{gufd slice}
      Let $B$ be a $\Ufd$ over a field $k$ with $\Tr _kB=2$ and $\delta \in \Exp (B)$ is non-trivial. Suppose $B$ is geometrically factorial. Then $B=(B^{\delta})^{[1]}.$
\end{corollary}

We need the following result which is a direct consequence of \cite[Proposition $2.1$ and $2.2$]{JX}.

\begin{proposition}\label{gfac}
     Let $B$ be a $\Ufd$ over a field $k$ and $\delta \in \Exp(B)$. If $B$ is geometrically factorial, then $B^{\delta}$ is geometrically factorial. 
\end{proposition}

We recall the following result from \cite[Proposition $4.8$]{AEH}. 
\begin{proposition}\label{tr1}
    Let $A$ be an $\mathrm{HCF}$-ring and $B=A[X_1,\dots,X_n]=A^{[n]}$. Suppose $R$ is a ring and  $\Tr _AR=1$ such that $A\subset R\subset B.$ If $R$ is factorially closed in $B$, then $R= A^{[1]}$.
\end{proposition}

\section{Ring of invariants of an exponential map of a polynomial ring}
We require the following technical lemma. It is similar to \cite[Chapter $1$, Principle $7$]{Gene} when the characteristic of $k$ is zero.   
\begin{lemma}\label{tech}
        Let  $\delta \in \Exp (B)$ be non-trivial and given by 
        $$\delta(x)=x+D_1(x)t+D_2(x)t^2+\cdots +D_k(x)t^k.$$ Suppose $c\in B^{\delta}\setminus\{0\}$. Then
        \begin{enumerate}
            \item $c\delta$ defined by 
             $$c\delta(x)=x+cD_1(x)t+c^2D_2(x)t^2+\cdots +c^kD_k(x)t^k,$$
              is an exponential map of $B$.

            \item $B^{\delta}=B^{c\delta}$.
        \end{enumerate}
\end{lemma}

\begin{proof}
    \emph{$(1):$} Let $\{D'_n\}_{n=0}^{\infty}$ be higher derivatives associated to $c\delta$. Then, $D'_n=c^nD_n$ is a $k$-linear endomorphism of $B$, and $D_0'=D_0$ is the identity map. We note that $D_n'$ satisfies property $(2)$ of (\ref{def}) for all $n$. 
    Property $(3)$ of (\ref{def}) is satisfied as follows. $$\sum_{0\leq i\leq n}D_i'(x)D_{n-i}'(y)=\sum_{0\leq i\leq n}c^iD_i(x)c^{n-i}D_{n-i}(y)=c^n \sum_{0\leq i\leq n}D_i(x)D_{n-i}(y)=c^nD_n(xy)=D_n'(xy).$$ For all integers  $i,j\geq 0$, we have 
    $$D_i'\circ D_j'=c^iD_i\circ c^jD_j=c^{i+j}D_i\circ D_j=c^{i+j}\binom{i+j}{j}D_{i+j}=\binom{i+j}{j}D_{i+j}' ,$$ where the second equality uses the fact that $c\in B^{\delta}$. Since $\{D_n'\}_{n=0}^{\infty}$ satisfy $(1)$-$(4)$ of (\ref{def}), it follows that $c\delta$ as defined above is an exponential map of $B$.

    \emph{$(2):$} This follows since $B^{c\delta}=\cap_{n\geq 1}\Ker(D_n')$ and $D_n'=c^nD_n$.
\end{proof}

We require the following property, which is a consequence of the previous lemma. 
\begin{lemma}\label{restrict}
    Let $B$ be an affine $k$-domain and $S\subset B\setminus\{0\}$ be a multiplicatively closed subset. Let $\delta \in \Exp (S^{-1}B)$ . Then there exists $c\in S$ such that $c\delta \in \Exp(S^{-1}B)$ as defined in (\ref{tech}), restricts to $B$ i.e., $c\delta \in \Exp (B)$.
\end{lemma}

\begin{proof}
    Let $b_1,\dots,b_n$ be the generators of $B$. We have 
    $$\delta(b_i)=b_i+D_1(b_i)t+D_2(b_i)t^2+\cdots+D_{k_i}(b_i)t^{k_i},$$ where $D_l(b_i)\in S^{-1}B$ for all $l$. We note that there exists a $c\in S$ such that $c^l D_{l}(b_i)\in B$ for all $l$. Hence, $c\delta(b_i)\in B$ for all $i$, where $c\delta \in \Exp(S^{-1}B)$ as defined in (\ref{tech}). Since $c\delta$ is a $k$-algebra homomorphism, it follows that $c\delta(B) \subset B$ and hence $c\delta \in \Exp(B)$.  
\end{proof}

We require the following lemma, which is proved in \cite[$2.22$]{Gene} when the characteristic of $k$ is zero. We will use a similar argument to prove it when $k$ is of arbitrary characteristic. 
\begin{lemma}\label{local lemma}
    Let $S\subset \ML (B)\setminus \{0\}$ be a multiplicatively closed subset. Then 
    \begin{enumerate}
        \item $\ML (S^{-1}B)\subset S^{-1}\ML (B)$.

        \item If $B$ is an affine $k$-domain, then $\ML (S^{-1}B)=S^{-1}\ML (B)$.
    \end{enumerate}
\end{lemma}

\begin{proof}
\textit{(1)} Let $\frac{b}{s}\in \ML (S^{-1}B)$, where $b\in B$ and $s\in S$. We will show that $b\in \ML (B)$. Let $\delta \in \Exp (B)$. Then by $(5)$ of \ref{first principles}, $\delta$ extends to a non-trivial exponential map on $S^{-1}B$ and $S^{-1}\delta(\frac{b}{s})=\frac{\delta(b)}{s}=\frac{b}{s}$ and hence $\delta(b)=b$ and thus $b\in \ML (B)$.

\textit{(2)}  Let $\frac{b}{s} \in  S^{-1}\ML (B)$, where $b\in \ML (B)$ and $s\in S$. Since $s$ is a unit of $S^{-1}B$, it is enough show that $b \in \ML (S^{-1}B).$ Let $\delta \in \Exp (S^{-1}B)$. Since $B$ is affine, by (\ref{restrict}) there exists $c\in S$ such that $c\delta \in \Exp (B)$. Since $b\in \ML (B)$, we get $(c\delta) (b)=b$ and hence $\delta(b)=b$ and thus $b \in \ML(S^{-1}B)$. Combining with $(1)$, the result follows. 
\end{proof}
The following result is a consequence of the above lemma and is proved in \cite[$2.23$]{Gene} when the characteristic of $k$ is zero.
\begin{corollary}\label{local rigid}
    Let $B$ be an affine $k$-domain and $S= B\setminus \{0\}$ be any multiplicatively closed set. 
    \begin{enumerate}
        \item If $B$ is rigid, then $S^{-1}B$ is rigid.  

        \item If $B$ is semi-rigid, then $S^{-1}B$ is semi-rigid. 
    \end{enumerate}
\end{corollary}

\begin{proof}
    \emph{$(1):$} This follows directly from $(2)$ of (\ref{local lemma}). 

    \emph{$(2):$} We can assume $B$ is semi-rigid but not rigid.  We are done if $S^{-1}B$ is rigid. Suppose $S^{-1}B$ is not rigid. Let $\epsilon \in \Exp (S^{-1}B)$ be non-trivial and note that $S\subset (S^{-1}B)^{\epsilon}$.  Since $B$ is affine, by (\ref{restrict}), there exists  $c\in S$ such that $\delta=c\epsilon \in \Exp(B)$. Since $B$ is semi-rigid, we have $B^{\delta}=\ML(B)$ and since $S \subset B^{\delta}$ , hence $S\subset \ML(B)$. By $(2)$ of \ref{local lemma}, we have $\ML(S^{-1}B)=S^{-1}B^{\delta}$. By $(5)$ of (\ref{first principles}), we have that $S^{-1}\delta \in \Exp (S^{-1}B)$ and $(S^{-1}B)^{S^{-1}\delta}=S^{-1}B^{\delta}$.  Thus $\ML(S^{-1}B)=(S^{-1}B)^{S^{-1}\delta}$. By $(4)$ of (\ref{first principles}), it follows that $S^{-1}B$ is semi-rigid. 
\end{proof}

\begin{corollary}
    Let $B$ be an affine $k$-domain and  $S\subset \ML (B)\setminus \{0\}$ be a multiplicatively closed subset. If $B$ is non-rigid, then $S^{-1}B$ is non-rigid. 
\end{corollary}

\begin{proof}
    By $(2)$ of (\ref{local lemma}), we have $\ML(S^{-1}B)=S^{-1}\ML (B)$. Suppose $S^{-1}B$ is rigid, then $\ML (S^{-1}B)=S^{-1}B=S^{-1}\ML(B)$. Consider $\tfrac{b}{1}\in S^{-1}B$. Since $\tfrac{b}{1}\in S^{-1}\ML(B)$, we have $\tfrac{b}{1}=\tfrac{x}{s}$ where $x\in \ML(B)$ and $x\in S$ which gives $bs=x\in \ML(B)$ and since $\ML(B)$ is factorially closed, $b\in \ML(B)$. Then $\ML(B)=B$ but this contradicts $B$ is non-rigid. Hence $S^{-1}B$ is rigid. 
\end{proof}

The following theorem gives a sufficient condition under which the ring of invariants of any non-trivial exponential map of any $k$-domain is non-rigid.

\begin{theorem}\label{main}
     Let $B$ be a $k$-domain and $\delta \in \Exp (B)$ be such that $B^{\delta}$ is an affine $k$-algebra. If the plinth ideal $(\Pl (\delta))$ contains a quasi-basic element, then $B^{\delta}$ is non-rigid.
\end{theorem}

\begin{proof}
    Let $x$ be a local slice of $\delta$ such that $c=lc_{\delta}(x)$ is a quasi-basic element. Since $c$ is a quasi-basic element so there exists non-trivial $\epsilon_1,\epsilon_2 \in \Exp (B)$ such that $c\in A_1 \cap A_2$, where $A_1=B^{\epsilon_1}, A_2=B^{\epsilon_2}$ and $A_1\neq A_2$.  By $(5)$ of (\ref{first principles}), $\epsilon_1$ and $\epsilon_2$ extend to non-trivial exponential maps on $B[c^{-1}]$ with ring of invariants $A_1[c^{-1}]$ and $A_2[c^{-1}]$ respectively.  
    We claim that $A_1[c^{-1}]\neq A_2[c^{-1}]$. Suppose if $A_1[c^{-1}]= A_2[c^{-1}]$, then $\frac{a}{1} \in A_2[c^{-1}]$ for any $a\in A_1$ which implies that $c^na\in A_2$ for some integer $n\geq 0$ but since $A_2$ is factorially closed we get that $a\in A_2$ and hence $A_1 \subset A_2$. By an identical argument, $A_2 \subset A_1$ and hence $A_1=A_2$ is a contradiction. This proves our claim from which it follows that $B[c^{-1}]$ is not semi-rigid. By $(3)$ of (\ref{first principles}), $B[c^{-1}]=B^{\delta}[c^{-1}][x]$. Using Semi-Rigidity theorem (\ref{semi}), it follows that $B^{\delta}[c^{-1}]$ is not rigid.  Now, by $(1)$ of (\ref{local rigid}), we get that $B^{\delta}$ is not rigid. 
\end{proof}

\begin{corollary}\label{normal}
    Let $B$ be a normal affine $k$-domain with $\Tr_k(B)=3$ and $\delta \in \Exp (B)$. If the plinth ideal ($pl(\delta)$) contains a quasi-basic element, then $B^{\delta}$ is non-rigid.  
\end{corollary}
 \begin{proof}
    By $(4)$ of (\ref{first principles}), we have $\Tr _kB^{\delta}=2$. Let  $L=frac(B^{\delta})$. We have $L\cap B=B^{\delta}$ and by Zariski's Finiteness theorem (\ref{zariski}), $B^{\delta}$ is affine. By \ref{main}, $B^{\delta}$ is non-rigid .
 \end{proof}

\begin{theorem}\label{plinthk3}
    Let $B=k^{[3]}$ and $\delta \in \Exp (B)$. If the plinth ideal ($pl(\delta)$) contains a quasi-basic element, then $B^{\delta}= k^{[2]}$. 

\end{theorem}
\begin{proof}
    Follows by \ref{normal} and \ref{kojima}.
\end{proof}
\noindent
The following result is an application of (\ref{main}). In section $5$, we prove a stronger version (\ref{triangularexpo}) of this result without requiring the condition that $B^{\delta}$ is an affine $k$-algebra. 
\begin{theorem}\label{triexpo}
    Let $B=k^{[n]},n\geq 2$ and $\delta \in \Exp (B)$ be a triangular exponential map. If $B^{\delta}$ is an affine $k$-algebra, then $B^{\delta}$ is non-rigid. 
\end{theorem}

\begin{proof}
    Suppose $\delta$ is triangular with respect to a system of variables $\{X_1,\dots,X_n\}$. Let $i$ be the smallest such that $\delta(X_i)\neq X_i$. Then $X_i$ is a local slice and $\delta(X_i)-X_i\in tk[X_1,\dots,X_{i-1}][t]$. We have $lc_\delta (X_i)\in k[X_1,\dots,X_{i-1}]$. Note that $lc_{\delta}(X_i)\in B^{\delta_i}$, where $\delta_i$ is the shift exponential map defined in (\ref{example}) and $B^{\delta_i}=k^{[n-1]}$. Suppose $B^{\delta}=B^{\delta_i}$, then clearly $B^{\delta}$ is non-rigid. If not, then $lc_{\delta}(X_i)$ is a quasi-basic element of $pl(\delta)$ and by (\ref{main}), $B^{\delta}$ is non-rigid. 
\end{proof}

\begin{remark}
\begin{enumerate}
    \item  The above result shows the existence of quasi-basic elements in the plinth ideal for triangular exponential maps, thus ensuring that it is not a vacuous condition.

    \item  By (\ref{triexpo}) and (\ref{kojima}), we get that the ring of invariants of any triangular exponential map of $k^{[3]}$ is $k^{[2]}$. This can also be deduced by \cite[Lemma $3.2$,  Proposition $3.4$ and Corollary $3.5$]{PMS}.
\end{enumerate}
\end{remark}

\section{Rigidity of the kernel of a locally nilpotent derivation of a polynomial ring}
\emph{
Throughout this section, $k$ is of zero characteristic. If $ D\in \Lnd (B)$ such that $R\subset \Ker (D)$, then $D$ is called a locally nilpotent $R$-derivation and the set of locally nilpotent $R$-derivations is denoted by $\Lnd _R(B)$.} 

 A derivation $D$ is called a \emph{locally nilpotent derivation} if for all $b\in B$ there exists a positive integer $n$ such that $D^n(b)=0$. We recall some definitions related to locally nilpotent derivations. 
\begin{definition}Let $B$ be a $k$-domain.
    \begin{enumerate}

    \item $B$ is called \emph{rigid} if there does not exist any non-zero $D \in \Lnd (B)$.

    \item  Let $B=R^{[n]}$. A derivation $D \in \Lnd_R( B)$ is called \emph{triangular} if there exists a coordinate system $\{X_1,\dots,X_n\}$ over $R$ such that  $D(X_i)\in R[X_1,\dots,X_{i-1}]$ for $1\leq i\leq n$. 

    \item  Let $B=R^{[n]}$. A derivation $D \in \Lnd_R(B)$ is called \emph{linear} if there exists a coordinate system $\{X_1,\dots,X_n\}$ over $R$ such that $D(X_i)$ is a linear homogeneous polynomial in $X_1,\dots,X_n$ for all $1\leq i \leq n$.

    \item Let $D\in \Lnd(B)$. The \emph{plinth ideal} associated to $D$ is given by $DB\cap \Ker (D)$. It is denoted by $\Pl(D)$.

    \end{enumerate}

\end{definition}

\begin{remark}\label{rmk1}
    Let $B$ be a $k$-domain and $D\in \Lnd(B)$. We define a $k$-algebra homomorphism 
    $\delta :B\longrightarrow B^{[1]}=B[t]$  defined by 
    $$\delta (b)=\sum_{i=0}^{n}\tfrac{D^n(b)}{n!}t^n.$$ We note that $\delta \in \Exp (B)$  and $B^{\delta}=\ker (D)$. Moreover, if $\epsilon=\{F_n\}_{n=0}^{\infty} \in \Exp (B)$, then $F_1$ is a locally nilpotent derivations, $F_n=\tfrac{F_1^n}{n!}$, and $B^{\epsilon}=\ker (F_1)$. Thus, we have a bijective correspondence between $\Exp (B)$ and $\Lnd (B)$. 
\end{remark}

\noindent

The following two results are restatements of  (\ref{main}) and (\ref{normal}) in the language of $\Lnd$. 

\begin{corollary}
    Let $D \in \Lnd(B)$ such that $\Ker (D)$ is an affine $k$-algebra. If the plinth ideal ($\Pl(D)$) contains a quasi-basic element, then $\Ker (D)$ is non-rigid. 
\end{corollary}

\begin{corollary}
    Let $B$ be a normal affine $k$-domain with $\Tr _K (B)=3$ and $D \in \Lnd (B)$. If the plinth ideal ($\Pl(D)$) contains a quasi-basic element, then $\Ker (D)$ is non-rigid. 
\end{corollary}

\begin{proposition}\label{rigid}
Let  $D\in \Lnd (B)$ be non-zero. If there exists a non-zero $E \in \Lnd (B)$ such that $DE=ED$ and $\Ker (D) \neq \Ker (E) $. Then, $\Ker (D)$ is non-rigid. 
\end{proposition}
\begin{proof}
    Let $A=\Ker (D)$ and $A'=\Ker (E)$. Since $DE=ED$, $E(A)\subset A$ and hence $E$ is a locally nilpotent derivation of $A$. We claim that $E(A)\neq 0$. Suppose $E(A)=0$, then $A\subset A'$. By (\ref{first principles}) $A$ and $A'$ are factorially closed in $B$ with $\Tr _AB=\Tr _{A'}B=1$, it follows that  $A=A'$ which contradicts $A\neq A'$ Thus, $E\in \Lnd (A)$ is non-zero; hence, $A$ is non-rigid. 
\end{proof}

\begin{corollary}\label{coro}
    Let $n\geq 2$ and $ D=\sum_{i=1}^n f_i\pdv{}{X_i} \in \Lnd _R (R[X_1,\cdots,,X_n])$.
    \begin{enumerate}
        \item If for some $1\leq j\leq n, \pdv{f_i}{X_j}=0$ for all $1\leq i  \leq n$. Then, $\Ker (D)$ is non-rigid.  

        \item If $\mathrm{div}(f_i)=\sum_{j=1}^n\pdv{f_i}{X_j}=0$ for all $1\leq i \leq n$. Then, $\Ker (D)$ is non-rigid.
    \end{enumerate}

\end{corollary}

\begin{proof}
\emph{$(1):$} Let $E=\pdv{}{X_j}$ and $A=\Ker (D)$. We have that $DE=ED$ and $\Ker (E)=R^{[n-1]}$. If $A=\Ker (E)$, then clearly $A$ is non-rigid. If $A\neq \Ker (E)$, then by (\ref{rigid}), $A$ is non-rigid. 

\emph{$(2):$} Let $E=\sum_{i=1}^{n}\pdv{}{X_i}$ and $A=\Ker (D)$.We note that $DE=ED$. Since $E^2(X_i)=0$ for all $i$, $E \in \Lnd (R^{[n]})$. From the first part, $\Ker (E)$ is non-rigid. If $A=\Ker (E)$, then clearly $A$ is non-rigid. If $A\neq \Ker (E)$, then by (\ref{rigid}), $A$ is non-rigid. 
\end{proof}

\begin{theorem}\label{tri}
    Let $D\in \Lnd _R (R^{[n]})$ and $n\geq 2$.  If $D$ is triangular, then $\Ker (D)$ is non-rigid.
\end{theorem}

\begin{proof}
    Let $D$ be triangular with respect to a coordinate system $\{X_1,\dots, X_n\}$. Let $f_i=D(X_i)\in R[X_1,\dots, X_{i-1}]$ for all $i$. Then $\pdv{f_i}{X_n}=0$ for all $1\leq i\leq n$ and the result follows by $(1)$ of (\ref{coro}). 
\end{proof}

\begin{remark}
 In \cite{DF}, a triangular derivation of $k^{[5]}$ is constructed whose kernel is not an affine $k$-algebra. We conclude that the rigidity of the kernel of a triangular derivation of $k^{[n]}$ is independent of whether the kernel is an affine $k$-algebra.
\end{remark}
\noindent
The following proposition shows that the kernel of some locally nilpotent derivations of the kernel of a triangular locally nilpotent derivation (constructed in \cite{DF}) are non-rigid. 

\begin{proposition}
    Let $B=k^{[5]}=k[x,s,t,u,v]$ and $T=x^3\frac{\partial}{\partial s}+s\frac{\partial}{\partial t}+t\frac{\partial}{\partial u}+x^2\frac{\partial}{\partial v}$. Let $A=\ker (T)$ and $E=\frac{\partial}{\partial v},F=\frac{\partial}{\partial u} \in \Lnd(k^{[5]})$. Then $E,F$ restrict to $A$ and $\ker (E|_A), \ker(F|_A)$ are non-rigid. 
\end{proposition}

\begin{proof}
    We note that $E, F$ commute with $T$. By (\ref{rigid}), $E$ and $F$ restricts to a non-zero locally nilpotent derivation on $A$. Let $R=\ker (E|_A)$ and $S=\ker (F|_A) $. We have that $R=\ker (E) \cap A=k[x,s,t,u]\cap A$ and $S=\ker (F)\cap A=k[x,s,t,v]\cap A$. Notice that $(s-xv)\in S$ but $(s-xv) \notin R$ and hence $R\neq S$. Thus, $E|_A$ and $F|_A$ are two commuting locally nilpotent derivations on $A$ and $R\neq S$. By (\ref{rigid}), $R,S$ are non-rigid.
\end{proof}

\begin{theorem}\label{linear}
    Let $n\geq 2$ and  $D\in \Lnd_R (R^{[n]})$ is linear. Then, $\Ker (D)$ is non-rigid. 
\end{theorem}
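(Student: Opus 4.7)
The plan is to apply Theorem \ref{rigid} by constructing a commuting locally nilpotent derivation $E$, and to handle the degenerate case where Theorem \ref{rigid} fails by a direct argument on $\Ker(D)$. First I would represent $D$ by its matrix: writing $D(X_i) = \sum_j a_{ij} X_j$, the matrix $A = (a_{ij}) \in M_n(R)$ is nilpotent because $D$ is locally nilpotent. The case $A = 0$ is immediate since then $\Ker(D) = R^{[n]}$ carries $\partial/\partial X_1$, so I assume $A \neq 0$.

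The natural candidate for $E$ is a constant-coefficient derivation $E := \sum_i c_i\,\partial/\partial X_i$ for some $c = (c_1, \ldots, c_n) \in R^n$. Such an $E$ is automatically locally nilpotent because it strictly decreases total degree. A short computation gives $[D,E](X_i) = -(Ac)_i$, so $E$ commutes with $D$ iff $c \in \ker_R(A)$. Since $A$ is a nonzero nilpotent matrix, its kernel over the fraction field $K$ of $R$ is nonzero, and after clearing denominators I obtain a nonzero $c \in R^n$ with $Ac = 0$. The resulting $E$ is then a nonzero locally nilpotent $R$-derivation commuting with $D$; whenever $\Ker(E) \neq \Ker(D)$, Theorem \ref{rigid} immediately concludes the proof.

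The main obstacle is the residual case $\Ker(E) = \Ker(D)$, in which Theorem \ref{rigid} cannot be invoked with this $E$. I would dispose of it directly: since $E$ has constant coefficients, every partial derivative $\partial/\partial X_j$ commutes with $E$ and therefore preserves $\Ker(E) = \Ker(D)$. By Proposition \ref{first principles}, $\Ker(E)$ has transcendence degree $n-1 \geq 1$ over $R$, so $\Ker(E) \not\subseteq R$; choosing a non-constant $f \in \Ker(E)$ together with some index $j$ such that $\partial f/\partial X_j \neq 0$, the restriction $(\partial/\partial X_j)|_{\Ker(D)}$ is a nonzero locally nilpotent $R$-derivation of $\Ker(D)$, which establishes non-rigidity.
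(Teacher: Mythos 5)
Your proposal is correct and follows essentially the same route as the paper: build $E=\sum_i c_i\,\partial/\partial X_i$ from a vector in the kernel of the nilpotent matrix $A$, check $[D,E]=0$, and invoke Theorem \ref{rigid}. The only divergence is the case $\Ker(D)=\Ker(E)$, where the paper cites Corollary \ref{tri} (as $E$ is triangular) while you give a direct argument that some $\partial/\partial X_j$ restricts to a nonzero locally nilpotent derivation of $\Ker(E)$; both are valid.
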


\begin{proof}
     Let $R^{[n]}=R[X_1,\dots,X_n]$ and $D(X_i)=\sum_{j=1}^na_{ij}X_j$. Let $A=(a_{ij})_{n \times n}\in M_n(R)$. We have $DX=AX$, where $X=[X_1,\dots,X_n]^T$ and hence $D^mX=A^mX$

     Since $D\in \Lnd _R(R^{[n]})$, there exists $N\in \N$ such that $D^N(X_i)=0$ for all $1\leq i \leq n$. It follows that $A^N=0$, i.e., $A$ is a nilpotent matrix and, in particular, $\mathrm{det}(A)=0$. By considering $A \in M_n(K)$, where $K=\mathrm{frac}(R)$ and clearing denominators, we can choose a non-zero vector $\Lambda=[\lambda_1,\dots,\lambda_n]^T$ with $\lambda_i\in R $ such that $A\Lambda=0$.  
    
    Define $E=\sum_{i=1}^n\lambda_i\pdv{}{X_i}$. Since $E^2(X_i)=0$ for all $i$, $E\in \Lnd_R (R^{[n]})$.  We have that $DE(X_i)=0$ for all $i$ and $ED(X_i)=\sum_{j=1}^n\lambda_ja_{ij}=0$ and thus $DE=ED$. Since $E$ is a triangular derivation, by (\ref{tri}), $\Ker (E)$ is non-rigid. If $\Ker (D)=\Ker (E)$, then clearly $\Ker (D)$ is non-rigid. If $\Ker(D) \neq \Ker (E)$,  then by (\ref{rigid}), $\Ker (D)$ is non-rigid. 
\end{proof}

\begin{definition}\label{irredLnd}
     Let $B$ be a $k$-domain. A set $\{D_1,\dots, D_m\}\in \Lnd (B)$ is  called a set of  \emph{irredundant pairwise commuting} locally nilpotent derivations  of $B$ if they are pairwise commuting and $\bigcap_{j\neq i}\ker D_j \nsubseteq \ker D_i$ for all $1\leq i\leq m$. 
\end{definition}

We recall the following result \cite[Lemma $3.1$]{SM} which was stated for $\C$-domains but holds for $k$-domains, where $k$ is any field of characteristic zero. 
\begin{lemma}
    Let $B$ be a $k$-domain and $D_1,\dots,D_n\in \Lnd(B)$ be pairwise commuting and  linearly independent (over $B$). Let $A_i=\ker D_i$ for $1\leq i\leq n$. Then for each $1\leq i\leq n$,  $\{D_j|_{A_i}|1\leq j\leq n , j\neq i\}$ is a set of pairwise commuting linearly independent (over $A_i$) locally nilpotent derivations of $A_i$. 
\end{lemma}

We show that the notion of irredundant pairwise commuting locally nilpotent derivations is equivalent to the notion of pairwise commuting linearly independent locally nilpotent derivations. 

\begin{proposition}\label{capture}
     Let $B$ be a $k$-domain and  $S=\{D_1,\dots,D_n |D_i\in \Lnd(B)\}$. Then 
     $$ S \textit{ is pairwise commuting linearly independent}  \iff S \textit{ is  irredundant pairwise commuting }.$$
\end{proposition}

\begin{proof}
    \emph{$\Rightarrow$}:
    Suppose $S$ is pairwise commuting linearly independent. Let $A_i=\ker D_i$ and $B_i=\bigcap_{j\neq i}A_j$. By symmetry, it is enough to show that $D_n|_{B_n}$ is non-zero. By the previous lemma,$\{D_2,\dots,D_n\}\in \Lnd (A_1)$ is pairwise commuting and linearly independent (over $A_1$). Applying the lemma again, $\{D_3,\dots,D_n\}\in \Lnd (A_1\cap A_2)$ is pairwise commuting and linearly independent (over $A_1\cap A_2$) and so on we get that $\{D_n\in \Lnd (B_n)\}$ is linearly independent over $B_n$ and hence $D_n|_{B_n}$ is nonzero. It follows that $S$ is irredundant pairwise commuting.

    \emph{$\Leftarrow$}: 
    Suppose $S$ is irredundant and pairwise commuting. If $\{D_1,\dots,D_n\}$ is not linearly independent (over $B$). Then we have $\sum_{j=1}^nb_jD_j=0$, where not all $b_j$ are zero. Suppose $b_i\neq 0$. Let $x\in \bigcap_{j\neq i}\ker (D_j)$, then we get $b_iD_i(x)=0$ and hence $\bigcap_{j\neq i}\ker (D_j)\subset \ker (D_i)$. This contradicts that $S$ is irredundant pairwise commuting and hence $\{D_1,\dots,D_n\}$ is linearly independent (over $B$). 

\end{proof}

\section{Commuting exponential maps}
\emph{Throughout this section, $B$ denotes a domain and $k\subset R\subset B$ }
In this section, we extend the ideas of the previous section to exponential maps. We introduce the notion of commuting exponential maps as follows. 
\begin{definition}\label{commute}
    Let  $\delta =\{D_n\}_{n=0}^{\infty}$, $\epsilon=\{E_n\}_{n=0}^{\infty}\in \Exp_R (B)$ are said to be \emph{commuting} if $D_iE_j=E_jD_i$ for all $i,j\geq 1$. 
\end{definition}

\begin{proposition}\label{commuting expo}
     Let $\delta \in \Exp_R (B)$ be non-trivial and given by $\delta=\{D_n\}_{n=0}^{\infty}$. Suppose there exists a non-trivial $\epsilon \in \Exp_R (B)$ where $\epsilon=\{E_n\}_{n=0}^{\infty}$ such that $\delta ,\epsilon$ commute and $B^{\delta}\neq B^{\epsilon}$. Then $\delta,\epsilon$ restrict to non-trivial $R$-exponential maps on $B^{\epsilon}$ and $B^{\delta}$, respectively. In particular, $B^{\delta}$ and $B^{\epsilon}$ are non-rigid. 
\end{proposition}

\begin{proof}
    We show that $\epsilon $ restricts to an exponential map of $B^{\delta}=\bigcap _{n=1}^{\infty}\Ker (D_n)$.  Let $x\in B^{\delta}$. Since $D_iE_j=E_jD_i$, we get $D_i(E_j(x))=0$ for all $i,j \geq 1$ which implies $E_j(x)\in B^{\delta}$ for all $j \geq 1$ and $x\in B^{\delta}$. Suppose $\epsilon|_{B^{\delta}}$ is trivial. Then $B^{\delta} \subset B^{\epsilon}$, by $(4)$ of (\ref{first principles}), $\Tr _{B^{\delta}}B=\Tr _{B^{\epsilon}}B=1$. By $(2)$ of (\ref{first principles}), it follows that $B^{\delta}=B^{\epsilon}$ which is a contradiction. Thus, $\epsilon|_{B^{\delta}}$ is a non-trivial exponential map of $B^{\delta}$ and hence $B^{\delta}$ is non-rigid. Similarly, we can show that $\delta|_{B^{\epsilon}}$ is non-trivial and hence $B^{\epsilon}$ is non-rigid.
\end{proof}

\begin{theorem}\label{triangularexpo}
    Let $B=R^{[n]}$, where $R$ is a $k$-domain, $n\geq 2$ and $\delta \in \Exp_R (B)$ be a triangular exponential map (defined in $(8)$ of (\ref{basic})). Then $B^{\delta}$ is non-rigid. 
\end{theorem}
\begin{proof}
    Let $\delta $ be triangular with respect to the coordinate system $\{X_1,\dots,X_n\}$ and $\delta =\{D_n \}$ be the $R$-linear endomorphisms associated to $\delta$. Since $\delta$ is triangular we have that, $D_j(X_k)\in R[X_1,\dots, X_{k-1}]$ for all $1\leq k \leq n$ and $j\geq 1$. Consider the shift exponential map  $\epsilon=\{E_n\}_{n=0}^{\infty} \in \Exp_R(B)$ which is defined by $\epsilon (X_i)=X_i$ for all $i<n$ and $\epsilon(X_n)=X_n+t$.  Then we have $E_j(X_k)=0$ for all $k<n, j\geq 1$ and $E_1(X_n)=1$ and $E_j(X_n)=0$ for all $j\geq 2$. We claim that $D_iE_j=E_jD_i$ for all $i,j\geq 1$. 

    We note that $D_iE_j-E_jD_i$ is an $R$-linear map of $B$ and  its kernel is an $R$-submodule of $B$, hence it is enough to show that $D_i(E_j(X_1^{e_1}\cdots X_n^{e_n}))=E_j(D_i(X_1^{e_1}\cdots X_n^{e_n}))$ for  $i,j\geq 1 ,$ and $ e_k\geq 0$ for all $1\leq k\leq n$ . We first show that $D_i(E_j(X_n^{e_n}))=E_j(D_i(X_n^{e_n}))$ for all $i,j\geq 0$ and $e_n\geq 1$. We will use induction on $i,j,e_n$.  The base case $i=j=0$ and $e_n=1$ is clearly true.

    Suppose this holds for all tuples $(i,j,e_n)$ where $i<a$ or $j<b$ or $e_n<m$. Then by the product and linearity properties of $D_a,E_b$ we have
    \begin{align*}
        D_a(E_b(X_n^m))-E_b(D_a(X_n^m))=D_a(\sum_{l=0}^bE_l(X_n^{m-1})E_{b-l}(X_n))-E_b(\sum_{k=0}^aD_k(X_n^{m-1})D_{a-k}(X_n))&\\
        =\sum_{l=0}^b D_a(E_l(X_n^{m-1})E_{b-l}(X_n))-\sum_{k=0}^aE_b(D_k(X_n^{m-1})D_{a-k}(X_n))&\\
        =\sum_{l=0}^{b}\sum_{k=0}^a D_k(E_l(X_n^{m-1}))D_{a-k}(E_{b-l}(X_n))-\sum_{k=0}^a\sum_{l=0}^bE_l(D_k(X_n^{m-1}))E_{b-l}(D_{a-k}(X_n))=0&\\
    \end{align*} 
    where the last equality follows by the induction hypothesis.  This proves that $D_i(E_j(X_n^{e_n}))=E_j(D_i(X_n^{e_n}))$ for all $i,j,e_n\geq 0$. Using this, we show that $D_i(E_j(X_1^{e_1}\cdots X_n^{e_n}))=E_j(D_i(X_1^{e_1}\cdots X_n^{e_n}))$ for  $i,j\geq 1 ,$ and $ e_k\geq 0$ for all $1\leq k\leq n$. 
    
    Since $X_1,\dots, X_{n-1} \in B^{\epsilon}$, we get that $R[X_1,\dots,X_{n-1}] \subset \Ker (E_j)$ for all $j \geq 1$.
    By using the product and linearity rules we have 
    \begin{align*}
        D_i(E_j(X_1^{e_1}\cdots X_n^{e_n}))&-E_j(D_i(X_1^{e_1}\cdots X_n^{e_n}))\\
        &=D_i(X_1^{e_1}\cdots X_{n-1}^{e_{n-1}}E_j(X_n^{e_n}))-E_j(\sum_{k=0}^iD_k(X_1^{e_1}\cdots X_{n-1}^{e_{n-1}})D_{i-k}(X_n^{e_n}))\\
        &=\sum_{k=0}^i D_k(X_1^{e_1}\cdots X_{n-1}^{e_{n-1}})D_{i-k}(E_j(X_n^{e_n}))-\sum_{k=0}^iE_j(D_k(X_1^{e_1}\cdots X_{n-1}^{e_{n-1}})D_{i-k}(X_n^{e_n}))\\
        &=\sum_{k=0}^iD_k(X_1^{e_1}\cdots X_{n-1}^{e_{n-1}})D_{i-k}(E_j(X_n^{e_n}))-\sum_{k=0}^i D_k(X_1^{e_1}\cdots X_{n-1}^{e_{n-1}})E_j(D_{i-k}(X_n^{e_n}))\\    
        &=0
    \end{align*} 
    where the third equality follows by noticing that $D_j(X_1^{e_1}\cdots X_{n-1}^{e_{n-1}}) \in R[X_1,\dots, X_{n-1}]$ and the last equality follows by using that $D_i(E_j(X_n^{e_n}))=E_j(D_i(X_n^{e_n}))$ for all $i,j,e_n\geq 0$. We conclude that $\delta, \epsilon$ are commuting exponential maps. Since $B^{\epsilon}=R^{[n-1]}$, if $B^{\delta}=B^{\epsilon}$, then clearly $B^{\delta}$ is non-rigid. If not, by (\ref{commuting expo}), $B^{\delta}$ is non-rigid.  
\end{proof}

\begin{remark}
The above proof doesn't use the condition $(b)$ of the definition ($8$ of \ref{basic}) of a triangular exponential map.     
\end{remark}

\section{Commuting exponential maps conjecture}
\emph{Throughout this section, $B$ denotes a $k$-domain}
\begin{definition}\label{irred}
   A set of  non-trivial exponential maps $\{\delta_1,\dots, \delta_m\}$ of $B$ is  called a set of  \emph{irredundant pairwise commuting} exponential maps of $B$ if they are pairwise commuting and $\bigcap_{j\neq i}B^{\delta_j}\nsubseteq  B^{\delta_i}$ for all $1\leq i\leq m$. 
\end{definition}

\noindent 
We make the following observation.
\begin{remark}\label{rmk2}
    Let $k$ be a field of characteristic zero and $B$ be a $k$-domain. Let $\{\delta_1,\dots,\delta_n\}$ be a set of irredundant pairwise commuting exponential maps of $B$. Let $D_i$ be the corresponding locally nilpotent derivations to $\delta_i$ as stated in (\ref{rmk1}). Then $\{D_1,\dots,D_n\}$ is a set of pairwise commuting linearly independent locally nilpotent derivations of $B$.  In particular, we have $CE(n)\iff CD(n)$.
\end{remark}

\noindent
The goal of this section is to prove the equivalence of the commuting exponential maps conjecture and the weak Abhyankar-Sathaye conjecture in arbitrary characteristic. We require the following observation about shift exponential maps. 

\begin{lemma}\label{shiftCommuting}
    Let $B=k^{[n]}=k[X_1,\dots,X_n]$ and shift exponential maps be as defined in (\ref{example}). Then any two shift exponential maps relative to the same coordinate system are commuting.  
\end{lemma}

\begin{proof}
      Let $\delta=\{D_a\}_{a=0}^{\infty}, \epsilon=\{E_b\}_{b=0}^{\infty}$ be shift exponential maps as defined by $\delta(X_l)=X_l \ \forall l\neq i, \delta (X_i)=X_i+t$ and $\epsilon(X_l)=X_l \ \forall l\neq j, \epsilon (X_j)=X_j+t$, respectively. We show that $\delta, \epsilon$ are commuting exponential maps.  
      
     We note that $D_aE_b-E_bD_a$ is a $k$-linear map of $B$ and its kernel is a $k$-submodule of $B$ and hence it is enough to show that $D_a(E_b(X_1^{e_1}\cdots X_n^{e_n}))=E_b(D_a(X_1^{e_1}\cdots X_n^{e_n}))$ for  $a,b\geq 0 ,$ and $ e_l\geq 0$ for all $1\leq l\leq n$.  We note that $D_a$ is $B^{\delta}$-linear for all $a\geq 0$ and $E_b$ is $B^{\epsilon}$-linear for all $b\geq 0$. Since $X_l\in B^{\delta}\cap B^{\epsilon}$ for $l\neq i,j$, we have $$(D_aE_b-E_bD_a)(f)=0\ \forall f\in k[X_1,\dots,\hat{X_i},\dots,\hat{X_j},\dots,X_n].$$    
     Thus, it is enough to show that $(D_aE_b-E_bD_a)(X_i^{r} X_j^{s})=0$ for all $r\geq 0, s\geq 0$. We show this by induction on the tuples $(a,b,r,s)$. The base case $a=0,b=0,r=0,s=0$ is trivially true.  Suppose this holds for all tuples $(a',b',r',s')$, where $a'<a$ or $b'<b$ or $r'<r$ or $s'<s$.  We have
      \begin{align*}
        D_aE_b(X_i^r X_j^s)&=D_a(\sum_{l=0}^{b}E_l(X_i^rX_j^{s-1})E_{b-l}(X_j))\\
        &=\sum_{l=0}^bD_a(E_l(X_i^rX_j^{s-1}) E_{b-l}(X_j))\\
        &=\sum_{l=0}^bE_{b-l}(X_j)D_a(E_l(X_i^rX_j^{s-1}))\\
        &=\sum_{l=0}^bE_{b-l}(X_j)E_l(D_a(X_i^rX_j^{s-1}))\\
        &=E_b(X_jD_a(X_i^rX_j^{s-1}))\\
        &=E_b(D_a(X_i^rX_j^s))
    \end{align*} 
    Where the third equality follows by noticing that $E_{b-l}(X_j) \in B^{\delta}$ and the fourth equality follows by the induction hypothesis. Thus, we have $D_aE_b=E_bD_a$ for all $a, b\geq 0$, and hence $\delta,\epsilon $ are commuting exponential maps. 
\end{proof}

\begin{lemma}\label{commuting slices}
    Let $B$ be a $k$-domain and $\delta_1,\dots,\delta_m\in \Exp(B)$ be pairwise commuting. Suppose there exists $s_1,\dots, s_m \in B$ such that $s_i$ is a slice of $\delta_i$ and $s_i\in \bigcap_{j\neq i}B^{\delta_j}$. Then $B=A[s_1,\dots,s_m]$ where $A=\bigcap_{j=1}^mB^{\delta_j}$.
\end{lemma}
\begin{proof}
    We can prove this by induction. If $m=1$, then the result follows by Slice theorem. Suppose the result holds for $m-1$. Then $B=C[s_1,\dots,s_{m-1}]$, where $C=\bigcap_{j=1}^{m-1}B^{\delta_j}$. Since $\delta_i$ are pairwise commuting, by (\ref{commuting expo}), we get $\delta_m \in \Exp(B^{\delta_j})$ for all $j\neq m$ and  it follows that $\delta_m|_{C} \in \Exp(C)$. Since $ s_m\in  \bigcap _{j\neq m}B^{\delta_j}$ and $\delta_m(s_m)=1$, it follows that $\delta_m$ is non-trivial with a slice $s_m\in C$ and has ring of invariants $B^{\delta_m}\cap C=A$. By Slice theorem, $C=A[s_m]$ and hence $B=A[s_1,\dots,s_m]$.
\end{proof}

\begin{lemma}\label{trdrop}
    Let $\Tr_k B=n$ and $1\leq l\leq n$. Suppose $S=\{\delta_1,\dots,\delta_l\}$ is a set of irredundant pairwise commuting exponential maps of $B$ and $A=\bigcap_{j=1}^lB^{\delta_j}$. Then $\Tr _kA=n-l$.
\end{lemma}

\begin{proof}
    We prove using induction on $(n,l)$. The base case $(n=l=1)$ follows by $(4)$ of \ref{first principles}. Suppose the result hold for all pairs  $(a,b)$ with $a<n$ or $b<l$. Let $C=\bigcap_{j=1}^{l-1}B^{\delta_j}$.  By inductive hypothesis $\Tr_k C=n-l+1$. Since $S$ is a set of irredundant commuting exponential maps, hence $\delta_l|_{C}$ is a non-trivial exponential map with ring of invariants  $B^{\delta_l}\cap C=A$. By $(4)$ of (\ref{first principles}), it follows that $\Tr_k (A)=n-l+1-1=n-l$.
\end{proof}

\begin{lemma}\label{isk1}
     Let $B=k[X_1,\dots,X_n]=k^{[n]}$ and $\delta_1,\dots,\delta_{n-1} \in \Exp (B)$ be a set of irredundant pairwise commuting exponential maps. Then $\bigcap_{j=1}^{n-1}B^{\delta_j}=k[f]=k^{[1]}$, where $f\in \bigcap_{j=1}^{n-1}B^{\delta_j}$.
\end{lemma}

\begin{proof}
    Let $A=\bigcap_{j=1}^{n-1}B^{\delta_j}$.  By $(\ref{trdrop})$, $\Tr_k A=1$. Since each $B^{\delta_j}$ is factorially closed subring of $B$, it follows that $A$ is a factorially closed subring of $B$. By (\ref{tr1}), it follows that $A=k[f]=k^{[1]}$ for some $f\in A$. 
\end{proof}

\begin{remark}
    By the above lemma, to prove $CE(n)$ it remains to show that $f$ is a coordinate of $B$.
\end{remark}

\begin{theorem}\label{forward}
    $CE(n) \implies WAS(n)$.
\end{theorem}

\begin{proof}
    Let $B=k[X_1,\dots,X_n]$ and $f\in B$ such that $k(f)[X_1,\dots,X_n]=k(f)^{[n-1]}=k(f)[g_1,\dots,g_{n-1}]$. Let $A=k(f)[g_1,\dots,g_{n-1}]$. For $1\leq j \leq n-1$, let  $\delta_j$  be the shift exponential map of $A=k(f)[g_1,\dots,g_{n-1}]$ defined by $\delta_j(g_i)=g_i$ for all $i\neq j$ and $\delta_j(g_j)=g_j+t$. 
    
    \textbf{Claim 1:} There exists $h(f)\in k[f]\setminus \{0\}$ such that $h(f)\delta_j$ as defined in (\ref{tech}), restricts to a non-trivial exponential map of $B$ for all $1\leq j \leq n-1$. 

    \textbf{Proof of claim 1:}  Let $\delta_j=\{D_j^k\}_{k=0}^{\infty}$, where $D_j^k$ are the higher derivatives associated to $\delta_j$ . We have
    $$\delta_j(X_i)=X_i+D_j^1(X_i)t+\cdots+D_j^m(X_j)t^m ,$$
    where $D_j^k(X_j)\in A=k(f)[X_1,\dots,X_n]$ and any element in $k(f)[X_1,\dots,X_n]$ can be multiplied by a polynomial in $f$ so that it belongs to $B$. By the locally finite property of $\delta_j$, there exists some $h_j(f)\in k[f]\setminus\{0\}$ such that $h_j(f)\delta_j$ restricts to $B$. By taking $h(f)=h_1(f)\cdots h_{n-1}(f)$, we get that $h(f)\delta_j \in \Exp(B)$ for all $1\leq j \leq n-1$. 

    \textbf{Claim 2:} \{$h(f)\delta_j|1\leq j \leq n-1 \}$ form a set of irredundant pairwise commuting exponential maps of $B$.  
    
    \textbf{Proof of claim 2:} By (\ref{shiftCommuting}),  \{$h(f)\delta_j|1\leq j \leq n-1 \}$ are pairwise commuting exponential maps of $A$ and hence they are pairwise commuting exponential maps of $B$. Note that $A^{h(f)\delta_j}=A^{\delta_j}=k(f)[g_1,\dots,\hat{g_j},\dots,g_{n-1}]$ and $\bigcap_{j\neq i}A^{h(f)\delta_j}=k(f)[g_i]$. Since $B^{h(f)\delta_j}=A^{h(f)\delta_j}\cap B$, we get $\bigcap_{j\neq i}B^{h(f)\delta_j}=k(f)[g_i]\cap B$. 

    Suppose $\bigcap_{j\neq i}B^{h(f)\delta_j}=k(f)[g_i]\cap B \subset B^{h(f)\delta_i}=A^{h(f)\delta_i}\cap B$.  We know that there exists a polynomial in $f$ say $p(f)$ such that $p(f)g_i\in B$ and hence $p(f)g_i\in A^{h(f)\delta_i}\cap B$ and in particular, $p(f)g_i\in A^{h(f)\delta_i}$. Since $A^{h(f)\delta_i}$ is factorially closed, it follows that $g_i\in A^{h(f)\delta_i}=A^{\delta_i}$, which is a contradiction. This proves the claim.

    By claims $(1)$ and $(2)$,  \{$h(f)\delta_j|1\leq j \leq n-1 \}$ is a set of  irredundant commuting exponential maps on $B$. We have 
    $$\bigcap_{j=1}^{n-1}B^{h(f)\delta_j}=\bigcap_{j=1}^{n-1}A^{h(f)\delta_j}\cap B=(\bigcap_{j=1}^{n-1} A^{\delta_j})\cap B=k(f)\cap B.$$  Note that $k(f)\cap B=k[f]$ and hence $\bigcap_{j=1}^{n-1}B^{h(f)\delta_j}=k[f]$. It follows that if $CE(n)$ is true, then $f$ is a coordinate of $B$, and hence $WAS(n)$ is true.   
\end{proof}

\begin{proposition}\label{technical}
    Let $B$ be a $\Ufd$ over $k$ with $\Tr_k B=n$ and $B$ is geometrically factorial.  Let $\{\delta_1,\dots, \delta_{n-1}\}$ be a set of irredundant pairwise commuting exponential maps. Denote  $A=\bigcap_{j=1}^{n-1}B^{\delta_j}$ and  $A_i=\bigcap_{j\neq i}B^{\delta_j}$. Suppose $A=k[f]=k^{[1]}$ for some $f \in A$. Then 
    \begin{enumerate}

        \item For any $i=1,\dots,n-1$, there exists $g_i\in A_i$ and $h_i\in A=k[f]$ such that $A_i=k[f,g_i]=k^{[2]}$ and $lc_{\delta_i}(g_i)=h_i(f)$.

        \item  $B_{h(f)}=k[f]_{h(f)}[g_1,\dots,g_{n-1}]$, where $h(f)$ denotes the least common multiple of $h_i$'s from $(1)$.  In particular, we have $B_{k[f]\setminus\{0\}}=k(f)[s_1,\dots,s_{n-1}]$.
    \end{enumerate}
\end{proposition}

\begin{proof}
    
    \emph{$(1):$} By (\ref{trdrop}), $\Tr_k A_i=2$. 
    
    \textbf{Claim 1:} $A_i=(\bigcap_{j\neq i}B^{\delta_j})$ is geometrically factorial. 

    \textbf{Proof of claim 1:} Let $k'$ be any algebraic extension of $k$. By \cite[Proposition $2.1$]{JX}, $\delta_j\otimes 1 \in \Exp (B \otimes_k k')$ and by \cite[Proposition $2.3$]{JX}, $(B\otimes_k k')^{\delta_j \otimes 1}=B^{\delta_j}\otimes_k k'$. By $(2)$ of (\ref{first principles}), $B^{\delta_i}\otimes_kk'$ is factorially closed in $B\otimes_k k'$.  Using that $(\bigcap_{j\neq i}B^{\delta_j}) \otimes_k k'=\bigcap_{j\neq i}(B^{\delta_j}\otimes_k k')$, we conclude that $(\bigcap_{j\neq i}B^{\delta_j}) \otimes_k k'$ is factorially closed in $B\otimes_kk'$ and hence $(\bigcap_{j\neq i}B^{\delta_j}) \otimes_k k'$ is a $\Ufd$.  This proves the claim.

    Since $\{\delta_j\}$ are irredundant commuting exponential maps,  $\delta_i|_{A_i}$ is a non-trivial exponential map of $A_i$. By (\ref{gufd slice}), $A_i=(A_i^{\delta_i})^{[1]}$, where $A_i^{\delta_i}=B^{\delta_i}\cap A_i=A=k[f]$. We have $A_i=k[f,g_i]$, where $g_i \in A_i$. By \cite[Lemma $3.1$]{PMS}, $lc_{\delta_i}(g_i)=h_i(f)$, where $h_i\in A=k[f]$.

    \emph{$(2):$} Note that $h(f) \in A\subset B^{\delta_j}$ for $1\leq j\leq n-1$. By $(5)$ of (\ref{first principles}), $\delta_j$ extends to $\tilde{\delta_j} \in \Exp (B_{h(f)})$ with ring of invariants $(B_{h(f)})^{\tilde{\delta_j}}=(B^{\delta_j})_{h(f)}$.  We have $$\bigcap_{j=1}^{n-1}(B_{h(f)})^{\tilde{\delta_j}}=\bigcap_{j=1}^{n-1}(B^{\delta_j})_{h(f)}=(\bigcap_{j=1}^{n-1}B^{\delta_j})_{h(f)}=A_{h(f)}=k[f]_{h(f)}.$$ Note that  for $1\leq j \leq n-1, \tilde{\delta_{j}}$ are pairwise commuting exponential maps of $B_{h(f)}$ and $g_j$ is a slice of $\tilde{\delta_j}$.  By (\ref{commuting slices}), $B_{h(f)}=C[g_1,\dots,g_{n-1}]$, where $C=\bigcap_{j=1}^{n-1}(B_{h(f)})^{\tilde{\delta_j}}=k[f]_{h(f)}$ and hence the result follows. 
\end{proof}

\begin{corollary}\label{backward}
    $WAS(n) \implies CE(n)$.
\end{corollary}
\begin{proof}
    Let $B=k[X_1,\dots,X_n]$ and $\{\delta_1,\dots,\delta_{n-1}\}$ be a set of irredundant commuting exponential maps and let $A=\bigcap_{j-1}^{n-1}B^{\delta_j}$  By (\ref{isk1}), $A=k[f]=k^{[1]}$.  By $(2)$ of (\ref{technical}), we get $B_{k[f]\setminus \{0\}}=k(f)[X_1,\dots,X_n]=k(f)^{[n-1]}$. If $WAS(n)$ is true, then $f$ is a coordinate of $B$, and hence $CE(n)$ is true. 
\end{proof}

By (\ref{forward}) and (\ref{backward}), we have the following result. 
\begin{theorem}\label{equivalence}
    $CE(n) \iff WAS(n)$.
\end{theorem}

\begin{theorem}\label{CD3}
    The commuting derivations conjecture $CD(3)$ is true for any field of zero characteristic. 
\end{theorem}

\begin{proof}
    This follows by (\ref{equivalence}),\cite[Corollary $4.12$]{DK} and (\ref{rmk2}).
\end{proof}

{\bf Acknowledgement:} The author thanks Manoj K. Keshari for reviewing the first draft of this work and suggesting that (\ref{linear}) can be extended to $R^{[n]}$. A part of this work was done when the author was supported by the Prime Minister's Research Fellowship (ID:1301165).


\bibliographystyle{alpha}
\bibliography{refs} 

\end{document}